\theoremstyle{plain}
\newtheorem {lemma}{Lemma}
\newtheorem {proposition}[lemma]{Proposition}
\newtheorem {theorem}[lemma]{Theorem}
\newtheorem {Key Lemma}[lemma]{Key Lemma}
\theoremstyle{definition}
\newtheorem{definition}[lemma]{Definition}
\newtheorem{remark}[lemma]{Remark}
\newtheorem*{proof1}{Proof of Theorem 1}
\newtheorem*{proof3}{Proof of Theorem 3}
\newcommand{\N}{\mathbb{N}}
\newcommand{\GL}{\operatorname{GL}}
\newcommand{\E}{\operatorname{E}}
\newcommand{\C}{\operatorname{C}}
\newcommand{\Perm}{\operatorname{P}}
\newcommand{\Center}{\operatorname{Center}}
\newcommand{\Mat}{\operatorname{M}}
\title{On general linear groups over exchange rings}
\author{Raimund Preusser}
\date{}
\begin{document}
\maketitle
\begin{abstract}
\noindent
Let $R$ be an exchange ring. We prove that the relative elementary subgroups $\E_n(R,I)$ are normal in the general linear group $\GL_n(R)$ if $n\geq 1$ and that the standard commutator formula $\E_n(R,I)=[\E_n(R),\E_n(R,I)]=[\E_n(R),\C_n(R,I)]$ holds if $n\geq 3$. Moreover, we classify the subgroups of $\GL_n(R)$ that are normalised by the elementary subgroup $\E_n(R)$ in the case $n\geq 3$.
\end{abstract}
\let\thefootnote\relax\footnotetext{{\it 2010 Mathematics Subject Classification.} 15A24, 20G35.}
\let\thefootnote\relax\footnotetext{{\it Keywords and phrases.} general linear groups, exchange rings, normal subgroups.}

\section{Introduction}
Let $R$ be a ring. A left $R$-module $M$ is said to have the {\it exchange property}
if for any module $X$ and decompositions
\[X = M'\oplus Y = \bigoplus_{i\in I} N_i\]
where $M'\cong M$, there exists submodules $N'_i\subseteq N_i$ for each $i$ such that
\[X = M' \oplus (\bigoplus_{i\in I}N'_i).\]
$R$ is called an {\it exchange ring} if $_RR$ has the exchange property. W. Nicholson \cite{nich} showed that $R$ is an exchange ring iff for any $x\in R$ there is an idempotent $e\in xR$ such that $1-e\in (1-x)R$. It follows that any exchange ring $R$ has property (1) below, see \cite[Proposition 1.11]{nich}.
\begin{align}
&\text{If } m\in\N \text{ and } x_1,\dots,x_m\in R\text{ such that }x_1+\dots + x_m = 1 \text{, then there are orthogonal }\notag\\\
&\text{idempotents } e_i\in x_iR~(1\leq i\leq m) \text{ such that } e_1 + \dots + e_m = 1.
\end{align}

The class of exchange rings is quite large, it contains all semiperfect rings \cite{nich}, semiregular rings \cite{nich}, local rings \cite{nich}, clean rings \cite{nich}, purely infinite simple rings \cite{ara} and (not necessarily simple) purely infinite Leavitt path algebras \cite[Corollary 3.8.19]{abrams-ara-molina}.

The main results of this paper are the following three theorems, where $\GL_n(R)$ denotes the general linear group (of degree $n$) over $R$, $\E_n(R)$ the elementary subgroup, $\E_n(R,I)$ the elementary subgroup of level $I$ and $\C_n(R,I)$ the full congruence subgroup of level $I$.

\begin{theorem}\label{thmtransv}
Suppose that $R$ is an exchange ring and $I\lhd R$ an ideal. Then $\E_n(R,I)$ is a normal subgroup of $\GL_n(R)$.
\end{theorem}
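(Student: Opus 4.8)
The plan is to reduce normality to a single column--row computation and then to use property~(1) to compensate for the absence of any stable-rank hypothesis. Since $\E_n(R,I)$ is by definition the normal closure in $\E_n(R)$ of the transvections $t_{ij}(a)=I_n+ae_{ij}$ with $a\in I$ and $i\neq j$, a typical group generator has the form $\epsilon t_{ij}(a)\epsilon^{-1}$ with $\epsilon\in\E_n(R)$, and for $g\in\GL_n(R)$ we have $g\,\epsilon t_{ij}(a)\epsilon^{-1}g^{-1}=(g\epsilon)t_{ij}(a)(g\epsilon)^{-1}$ with $g\epsilon\in\GL_n(R)$. Hence it suffices to show $h\,t_{ij}(a)\,h^{-1}\in\E_n(R,I)$ for every $h\in\GL_n(R)$. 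Writing $u$ for the $i$-th column of $h$ and $v$ for the $j$-th row of $h^{-1}$, a direct computation gives $h\,t_{ij}(a)\,h^{-1}=I_n+uav$; here $u$ is a unimodular column (the $i$-th row of $h^{-1}$ is a left inverse), the entries of $uav$ lie in $I$, and since $i\neq j$ one has $vu=0$. Setting $b=av$, so that $b$ is a row with entries in $I$ and $bu=0$, the theorem reduces to the statement that $I_n+ub\in\E_n(R,I)$ whenever $u$ is unimodular and $bu=0$.

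The motivating easy case is this: if one could find $\epsilon\in\E_n(R)$ with $\epsilon u=(w,0,\dots,0)^{\mathsf T}$ for a unit $w$, then $\epsilon(I_n+ub)\epsilon^{-1}=I_n+(\epsilon u)(b\epsilon^{-1})$, the relation $bu=0$ forces the first entry of $b\epsilon^{-1}$ to vanish, and the update collapses to a product $\prod_{l\geq 2}t_{1l}(wb'_l)$ of transvections with entries in $I$, hence lies in $\E_n(R,I)$; normality of $\E_n(R,I)$ in $\E_n(R)$ then finishes. Over a general exchange ring $\E_n(R)$ need \emph{not} act transitively on unimodular columns, so this global reduction of $u$ is unavailable. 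This is exactly where I would invoke property~(1): fixing a row $c$ with $cu=1$ and applying~(1) to the elements $c_ku_k$ (which sum to $1$) yields orthogonal idempotents $e_1,\dots,e_n$ with $e_k\in c_ku_kR$ and $\sum_k e_k=1$. These partition $1$ into pieces supported on single coordinates, so that on the corner attached to $e_k$ the coordinate $u_k$ behaves like a unit --- a local substitute for the missing stable-rank-one condition.

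On such a corner only pairwise operations are needed: with one coordinate invertible there, the remaining coordinates of $u$ are cleared by left multiplication with transvections $t_{kl}(\,\cdot\,)\in\E_n(R)$ involving just two rows, after which the rank-one update becomes a product of transvections $t_{kl}(\,\cdot\,)$ whose off-diagonal entries are products involving the entries of $b$ and therefore lie in $I$; these generate $\E_n(R,I)$. Because every clearing step uses only two rows at a time, the argument is available already for $n\geq 2$, while the case $n=1$ is vacuous since $\E_1(R,I)=1$; this is what yields the full range $n\geq 1$ claimed in the theorem.

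The hard part will be the passage from these corner-wise computations to a single honest factorization inside $\E_n(R,I)$. The idempotents $e_k$ are in general non-central, so the local factors do not commute and must be multiplied in a carefully chosen order, with each partial product controlled by the orthogonality relations $e_ke_l=\delta_{kl}e_k$ together with $bu=0$ to kill the cross terms; I expect the cleanest organisation is an induction on $n$, splitting off the last coordinate via~(1) applied to the two elements $c_nu_n$ and $1-c_nu_n$ and finishing the complementary corner by the inductive hypothesis. Throughout, the delicate bookkeeping is to guarantee that every transvection produced has entries in $I$ --- equivalently, that one never leaves $\E_n(R,I)$ after conjugating by elements of $\E_n(R)$ --- and it is precisely this level control that the exchange property is designed to make possible.
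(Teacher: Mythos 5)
Your reduction is sound and your overall strategy is, up to a left--right mirror, exactly that of the paper: write $h\,t_{ij}(a)\,h^{-1}=e+uav$ with $vu=0$, use property~(1) to split the rank-one update into ``corner'' factors, and finish each corner with elementary conjugations plus a Stepanov--Vavilov-type factorization. But there is a concrete handedness gap at the heart of your corner step. By folding $a$ into $b=av$ you committed to exploiting unimodularity of the \emph{column} $u$ (via $cu=1$), while invoking property~(1) in the right-handed form stated in the paper, $e_k\in c_ku_kR$, say $e_k=c_ku_ks_k$. Writing $e+ub=\prod_k(e+ue_kb)$ --- which, note, holds in \emph{any} order since $bu=0$ kills all cross terms, so the ``carefully chosen order'' and the induction on $n$ in your last paragraph are unnecessary --- your plan is to clear the entries $u_le_k$ ($l\neq k$) of the column $ue_k$ by conjugating with transvections $t_{lk}(\gamma_l)$. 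This requires $\gamma_l$ with $\gamma_l\,u_ke_k=u_le_k$, which is available when $e_k$ has $u_k$ as its right-most factor, i.e.\ when $e_k\in Rc_ku_k$ (then $e_k=m_kc_ku_k$ and $\gamma_l=u_lm_kc_k$ works since $u_lm_kc_ku_ke_k=u_le_k^2=u_le_k$); but from $e_k=c_ku_ks_k$ one cannot conclude $c_ku_ke_k=e_k$, and the cancellation fails. In the paper the handedness matches: there the \emph{row} $v$ is the unimodular vector, $e_i=v_iw_ir_i$ \emph{begins} with the row entry $v_i$, and conjugation by $t_{ij}(-w_ir_iv_j)$ gives $v'_j=xe_iv_j-xe_i(v_iw_ir_i)v_j=0$. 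Your route can be repaired either by invoking the left-handed version of property~(1) (legitimate, since the exchange condition is left--right symmetric by Nicholson \cite{nich}, so $R^{\mathrm{op}}$ is also exchange), or more simply by not folding $a$ into $v$ at all: the row $v=(h^{-1})_{j*}$ is itself unimodular ($vh_{*j}=1$), which puts you verbatim in the situation of the paper's Proposition~\ref{proptransv}.

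Second, even with correctly handed idempotents, the actual engine of the proof is missing. After clearing, the corner factor has the form $e+u'b'$ with $u'$ concentrated at position $k$, and this is \emph{not} a pure product of transvections with entries in $I$: one gets $\prod_{q\neq k}t_{kq}(db'_q)$ (fine, parameters in $I$) times a surviving diagonal factor $e+db'_ke^{kk}$, where $d$ is the $k$-th entry of $u'$ and $b'_kd=0$. Putting this diagonal factor into $\E_n(R,I)$ is precisely what the paper's Lemma~\ref{lemtransv} (the Stepanov--Vavilov identity: $vu=0$, one entry of $v$ zero, the rest in $I$, implies $e+uv\in\E_n(R,I)$) accomplishes, and your proposal never states or proves such a lemma --- your phrase ``becomes a product of transvections whose off-diagonal entries lie in $I$'' glosses over exactly this point, and your closing paragraph explicitly defers the factorization. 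So as written the proposal is a correct outline of the paper's approach in mirror image, with its two critical steps respectively wrong-handed (the corner clearing with right idempotents) and absent (the rank-one factorization lemma).
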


\begin{theorem}\label{thmstcom}
Suppose that $R$ is an exchange ring and $n\geq 3$. Then 
\[\E_n(R,I)=[\E_n(R),\E_n(R,I)]=[\E_n(R),\C_n(R,I)]\]
for any ideal $I\lhd R$.
\end{theorem}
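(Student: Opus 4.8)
The plan is to establish the two easy equalities first and then to reduce everything to a single hard inclusion. Since $\E_n(R,I)$ is by construction normal in $\E_n(R)$ (in fact normal in $\GL_n(R)$ by Theorem~\ref{thmtransv}), we have $[\E_n(R),\E_n(R,I)]\subseteq\E_n(R,I)$. For the reverse inclusion, fix $a\in I$ and distinct indices $i,j$; as $n\geq3$ we may pick a third index $k$, and then $t_{ij}(a)=[t_{ik}(a),t_{kj}(1)]$ with $t_{ik}(a)\in\E_n(R,I)$ and $t_{kj}(1)\in\E_n(R)$. Thus the standard generators $t_{ij}(a)$ of $\E_n(R,I)$ lie in $[\E_n(R),\E_n(R,I)]$; since the latter is normalised by $\E_n(R)$, it contains the $\E_n(R)$-normal closure of these generators, which is $\E_n(R,I)$. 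Hence $\E_n(R,I)=[\E_n(R),\E_n(R,I)]$, and as $\E_n(R,I)\subseteq\C_n(R,I)$ this gives $[\E_n(R),\E_n(R,I)]\subseteq[\E_n(R),\C_n(R,I)]$ immediately.

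All the difficulty is then concentrated in the inclusion $[\E_n(R),\C_n(R,I)]\subseteq\E_n(R,I)$. Because $\E_n(R)$ is generated by the transvections $t_{ij}(x)$ ($x\in R$) and $\E_n(R,I)$ is normal in $\GL_n(R)$, it suffices to prove that $[t_{ij}(x),g]\in\E_n(R,I)$ for every $x\in R$, all distinct $i,j$, and every $g\in\C_n(R,I)$. Reducing modulo $I$, the image of $g$ is a scalar matrix, hence central in $\GL_n(R/I)$, so $[t_{ij}(x),g]$ reduces to the identity and therefore already lies in the principal congruence subgroup $\GL_n(R,I)$. The genuine task is to upgrade this membership from $\GL_n(R,I)$ to $\E_n(R,I)$, i.e.\ to factor the commutator explicitly into level-$I$ elementary generators and their $\E_n(R)$-conjugates.

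To this end I would compute the commutator as the rank-one perturbation $[t_{ij}(x),g]=t_{ij}(x)\,(I_n-u\,x\,v)$, where $u$ denotes the $i$-th column of $g$ and $v$ the $j$-th row of $g^{-1}$; since $g,g^{-1}\in\C_n(R,I)$, every entry of $u$ and $v$ lies in $I$ apart from $u_i$ and $v_j$, which are units modulo $I$. Here the exchange hypothesis enters through property~(1): the identity $gg^{-1}=I_n$ supplies partitions of unity such as $\sum_{l}g_{il}(g^{-1})_{li}=1$, and applying~(1) yields orthogonal idempotents $e_1,\dots,e_n$ with $\sum_l e_l=1$ and $e_l\in g_{il}(g^{-1})_{li}R$. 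Splitting $t_{ij}(x)=\prod_l t_{ij}(x e_l)$ along this decomposition reduces the computation to pieces in which the perturbation factors through a single idempotent; each such piece can then be rewritten, using the three-index commutator identities once more (so again $n\geq3$), as a product of transvections $t_{kl}(b)$ with $b\in I$ conjugated by a fixed elementary matrix, i.e.\ as an element of $\E_n(R,I)$.

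The main obstacle is exactly this last, constructive step. Over a general exchange ring there is no stable-range bound, so the classical Bass--Vaserstein reduction is unavailable, and the idempotent decomposition~(1) must serve as its substitute, linearising and factoring the rank-one perturbation in the absence of commutativity; I expect this computation to be isolated as the Key Lemma, the remainder of the argument being the bookkeeping above. It is worth noting that $n\geq3$ is used twice in an essential way: to supply the third index both for the generation statement $\E_n(R,I)=[\E_n(R),\E_n(R,I)]$ and for the commutator manipulations that deposit the perturbation inside $\E_n(R,I)$.
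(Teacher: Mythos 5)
Your architecture coincides with the paper's: the two easy equalities are the paper's Lemma \ref{lemstcom} (your trick $t_{ij}(a)=[t_{ik}(a),t_{kj}(1)]$ is exactly relation (R3) plus normality of the commutator subgroup under $\E_n(R)$), the hard inclusion $[\E_n(R),\C_n(R,I)]\subseteq\E_n(R,I)$ is the paper's Proposition \ref{propstcom}, and you even identify its opening identity $[t_{ij}(x),\sigma]=t_{ij}(x)(e-\sigma_{*i}x\sigma'_{j*})$ and its key move, splitting $t_{ij}(x)=\prod_k t_{ij}(xe_k)$ along an idempotent partition of unity supplied by exchange property (1). But the proposal stops exactly where the proof begins: the ``constructive step'' you defer to an unproved Key Lemma is the entire content of Proposition \ref{propstcom}, and the two concrete indications you give for it are both off. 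First, the partition of unity is taken on the wrong side: you propose $e_l\in g_{il}(g^{-1})_{li}R$ from $(gg^{-1})_{ii}=1$, but splitting $t_{ij}(x)$ places the idempotent against the row $v=xe_k\sigma'_{j*}$, so the useful partition comes from $(\sigma^{-1}\sigma)_{jj}=1$, i.e.\ $e_k=\sigma'_{jk}\sigma_{kj}r_k$; it is precisely the resulting identity $e_k\sigma'_{jk}\sigma_{kj}r_k\sigma'_{jl}=e_k\sigma'_{jl}$ (idempotency of $e_k$) that lets the conjugating transvection $t_{kl}(-\sigma_{kj}r_k\sigma'_{jl})$ create a zero entry in $v$, which is the hypothesis Lemma \ref{lemtransv} needs.

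Second, and more seriously, your recipe treats all idempotent pieces uniformly (``each such piece can then be rewritten \dots as a product of transvections $t_{kl}(b)$ with $b\in I$ conjugated by a fixed elementary matrix''), and that is false for one of them. For $k\neq j$ one has $\sigma_{kj}\in I$ by Remark \ref{rmkcong}, hence $e_k\in I$ and the piece is genuinely of level $I$; but $e_j$ need not lie in $I$, so $t_{ij}(xe_j)\notin\E_n(R,I)$ in general and no uniform conjugation argument can deposit this piece in $\E_n(R,I)$. The paper's Case 2 handles it by a different and longer computation: conjugate by $\xi=\prod_{l\neq j}t_{jl}(-\sigma_{jj}r_j\sigma'_{jl})$ to kill all off-$j$ entries of $v$, factor the remainder explicitly, verify that the transvection parameter $xe_j(1+z)-\sigma_{ii}xe_j\sigma'_{jj}$ lies in $I$ using the congruence conditions $a\sigma_{ii}-\sigma_{jj}a\in I$ of Remark \ref{rmkcong} (this is where membership in $\C_n(R,I)$, rather than merely in the principal congruence subgroup, is consumed --- your intermediate observation that the commutator lies in $\GL_n(R,I)$ plays no role in the actual argument), and dispose of the residual diagonal factor $e+ze^{jj}$ by writing $z=ab$ with $b\in I$ and $ba=0$ and invoking Lemma \ref{lemtransv} once more. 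So the plan is the right plan, but the step you label ``bookkeeping'' contains the one case that your uniform method cannot reach, and that is a genuine gap.
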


\begin{theorem}\label{thmsct}
Suppose that $R$ is an exchange ring and $n\geq 3$. Let $H$ be a subgroup of $\GL_n(R)$. Then $H$ is normalised by $\E_n(R)$ iff there is an ideal $I\lhd R$ such that 
\[\E_n(R,I)\subseteq H\subseteq \C_n(R,I).\]
Moreover, the ideal $I$ is uniquely determined.
\end{theorem}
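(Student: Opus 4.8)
\emph{Sufficiency.} The plan is to prove the two implications separately, the forward direction carrying all the weight. Suppose first that $\E_n(R,I)\subseteq H\subseteq\C_n(R,I)$. For $e\in\E_n(R)$ and $h\in H$ I would write $ehe^{-1}=[e,h]\,h$; since $h\in\C_n(R,I)$, Theorem~\ref{thmstcom} gives $[e,h]\in[\E_n(R),\C_n(R,I)]=\E_n(R,I)\subseteq H$, whence $ehe^{-1}\in H$. Thus $\E_n(R)$ normalises $H$, and this direction is finished in one line.

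\emph{Necessity --- the level ideal and the upper bound.} Assume now that $\E_n(R)$ normalises $H$. I would attach to $H$ its \emph{level}, the two-sided ideal $I=I(H)$ generated by all off-diagonal entries $g_{ij}$ $(i\neq j)$ and all differences $g_{ii}-g_{jj}$ of diagonal entries, as $g$ ranges over $H$. By construction each $g\in H$ reduces modulo $I$ to a scalar matrix $d\,1_n$ with $d$ a unit of $R/I$. To see that $d$ is central modulo $I$, so that the reduction lies in the centre of $\GL_n(R/I)$, note that $[t_{ij}(r),g]\in H$ because $H$ is $\E_n(R)$-normalised, so its off-diagonal entries lie in $I$; a short computation shows these entries are congruent to $r-drd^{-1}$ modulo $I$, forcing $dr\equiv rd\pmod I$ for all $r$. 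Hence $H\subseteq\C_n(R,I)$, and the right-hand inclusion comes essentially for free.

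\emph{Necessity --- the lower bound.} It remains to prove $\E_n(R,I)\subseteq H$. Since $\E_n(R,I)$ is the normal closure in $\E_n(R)$ of the $t_{ij}(a)$ with $a\in I$, and $H$ is $\E_n(R)$-normalised, it suffices to show $t_{12}(a)\in H$ for every $a\in I$. I would study $J=\{a\in R:t_{12}(a)\in H\}$: it is an additive subgroup because $t_{12}(a)t_{12}(b)=t_{12}(a+b)$; conjugating by Weyl-type elements of $\E_n(R)$ shows $t_{12}(a)\in H\iff t_{ij}(\pm a)\in H$ for all $i\neq j$; and then, using $n\geq3$ and the relations $[t_{13}(r),t_{32}(a)]=t_{12}(ra)$ and $[t_{13}(a),t_{32}(s)]=t_{12}(as)$ (each lying in $H$ because $H$ is $\E_n(R)$-normalised and one of the two factors lies in $H$), one checks that $J$ absorbs left and right multiplication by $R$. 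Thus $J$ is a two-sided ideal, and since $I$ is generated by the off-diagonal entries and diagonal differences of elements of $H$, the inclusion $I\subseteq J$ --- equivalently $\E_n(R,I)\subseteq H$ --- reduces to the \emph{extraction statement}: for every $g\in H$ one has $t_{12}(g_{ij})\in H$ $(i\neq j)$ and $t_{12}(g_{ii}-g_{jj})\in H$.

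\emph{The obstacle, and uniqueness.} The extraction statement is the heart of the matter and the step I expect to be hard; it is also where the exchange hypothesis must enter, since over more classical rings such extractions are driven by localisation or stable-range conditions. Here the plan is to let property~(1) play that role: the commutators $[g,t_{kl}(r)]\in H$ are perturbations of $1_n$ by rank-bounded terms whose entries are explicit in those of $g$ and $g^{-1}$, and applying~(1) to a relation $1=\sum_s x_s$ built from the entry to be extracted should split $1_n$, and with it $g$, into orthogonal idempotent blocks over corner rings $e_sRe_s$ on which the target entry is invertible or zero; over such a corner the classical commutator extraction (available since $n\geq3$) applies, and reassembling the blocks should yield the desired transvection in $H$. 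Granting this, $\E_n(R,I)\subseteq H\subseteq\C_n(R,I)$. Uniqueness is then immediate: if also $\E_n(R,I')\subseteq H\subseteq\C_n(R,I')$, then $\E_n(R,I)\subseteq\C_n(R,I')$, so every $t_{12}(a)$ with $a\in I$ lies in $\C_n(R,I')$, which forces its off-diagonal entry $a$ into $I'$; hence $I\subseteq I'$, and by symmetry $I=I'$.
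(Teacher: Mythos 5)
Your overall skeleton coincides with the paper's: the sufficiency direction via $ehe^{-1}=[e,h]h$ and Theorem \ref{thmstcom} is word-for-word the paper's argument; the level ideal, the inclusion $H\subseteq\C_n(R,I)$ (your centrality-of-$d$ computation is in effect a reproof of the characterisation in Remark \ref{rmkcong}, and it shows your generating set and the paper's twisted differences $ag_{ii}-g_{jj}a$ generate the same ideal, so that discrepancy is harmless); the reduction of $\E_n(R,I)\subseteq H$ to the extraction statement via the ideal $J=\{a: t_{12}(a)\in H\}$ and Lemma \ref{lemp}; and the uniqueness argument are all as in the paper. But the extraction statement --- $t_{12}(g_{ij})\in H$ and $t_{12}(g_{ii}-g_{jj})\in H$ for $g\in H$ --- is the entire content of the theorem, the only place the exchange hypothesis enters, and you leave it unproven. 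That is a genuine gap, not a routine verification.

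Moreover, the plan you sketch for it would not work as stated. The idempotents produced by property (1) are orthogonal but in general not central, so they induce no ring decomposition $R\cong\prod_s e_sRe_s$, no Peirce block decomposition of $\GL_n(R)$ compatible with conjugation by $\E_n(R)$, and no meaningful ``restriction'' of $g$ to an invertible matrix over a corner $e_sRe_s$; ``reassembling the blocks'' has no content for non-central idempotents. The paper's actual mechanism is different in kind: property (1) is applied not to split the matrix $g$ but to split the ring element to be extracted, using the identity $\sum_p\sigma_{ip}\sigma'_{pi}=1$ (the $(i,i)$ entry of $\sigma\sigma^{-1}$) to produce orthogonal idempotents $e_p=\sigma_{ip}\sigma'_{pi}r_p$ summing to $1$. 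Each piece then satisfies a single-row vanishing relation $y\sum_p\sigma_{ip}x_p=0$ with some $x_j=1$, and the group-theoretic engine, Proposition \ref{propsct} (simultaneous reduction, via Lemma \ref{lemredux}), converts any such relation into an expression of $t_{kl}(ayx_ib)$ as a product of $8$ $\E_n(R)$-conjugates of $\sigma^{\pm1}$ --- hence an element of $H$, since $H$ is $\E_n(R)$-normalised. Summing over the idempotent pieces gives $t_{kl}(a\sigma_{ij}b)\in H$, and conjugating $\sigma$ by $t_{ji}(-c)$ and applying the same statement twice gives $t_{kl}(a(c\sigma_{ii}-\sigma_{jj}c)b)\in H$ (Proposition \ref{propsct2}). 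Until you supply an argument of this type, or a working substitute for it, your proof establishes everything except the step that makes the theorem true.
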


Similar results for other classes of rings (including the class of commutative rings) were obtained in \cite{bass}, \cite{wilson}, \cite{golubchik}, \cite{vaserstein}, \cite{vaserstein_ban} and \cite{vaserstein_neum}.

The rest of the paper is organised as follows. In Section 2 we recall some standard notation which is used throughout the paper. In Section 3 we recall the definitions of the general linear group $\GL_n(R)$ and some important subgroups. In Section 4 we prove Theorem \ref{thmtransv}, in Section 5 Theorem \ref{thmstcom} and in Section 6 Theorem \ref{thmsct}.

\section{Notation}
$\N$ denotes the set of positive integers. If $G$ is a group and $g,h\in G$, we let $g^h:=h^{-1}gh$ and $[g,h]:=ghg^{-1}h^{-1}$. If $H$ and $K$ are subgroups of $G$, we denote by $[H,K]$ the subgroup of $G$ generated by the set $\{[h,k]\mid h\in H,k\in K\}$. By a ring  we mean an associative ring with $1\neq 0$. By an ideal of a ring we mean a twosided ideal. To denote that $I$ is an ideal of a ring $R$ we write $I\lhd R$.

Throughout the paper $R$ denotes a ring and $n\in\N$ a positive integer. We denote by $R^n$ the set of all columns $u=(u_1,\dots,u_n)^t$ with entries in $R$ and by $^n\!R$ the set of all rows $v=(v_1,\dots,v_n)$ with entries in $R$. Furthermore, the set of all $n\times n$ matrices over $R$ is denoted by $\Mat_n(R)$. The identity matrix in $\Mat_n(R)$ is denoted by $e$ and the matrix with a one at position $(i,j)$ and zeros elsewhere is denoted by $e^{ij}$. If $\sigma\in \Mat_{n
}(R)$, we denote the 
the entry of $\sigma$ at position $(i,j)$ by $\sigma_{ij}$. We denote the $i$-th row of $\sigma$ by $\sigma_{i*}$ and its $j$-th column by $\sigma_{*j}$. If $\sigma\in \Mat_n(R)$ is invertible, we denote the entry 
of $\sigma^{-1}$ at position $(i,j)$ by $\sigma'_{ij}$, the $i$-th row of $\sigma^{-1}$ by $\sigma'_{i*}$ and the $j$-th column of $\sigma^{-1}$ by $\sigma'_{*j}$.  

\section{The general linear group} 

We shall recall the definitions of the general linear group $\GL_n(R)$ and its elementary subgroup $\E_n(R)$. For an ideal $I\lhd R$ we shall recall the definition of the preelementary subgroup $\E_n(I)$ of level $I$, the elementary subgroup $\E_n(R,I)$ of level $I$ and the full congruence subgroup $\C_n(R,I)$ of level $I$. 

\subsection{The general linear group and its elementary subgroup}

\begin{definition}
The group $\GL_{n}(R)$ consisting of all invertible elements of $\Mat_n(R)$ is called the {\it general linear group} of degree $n$ over $R$.
\end{definition}

\begin{definition}
Let $x\in R$ and $i,j\in\{1,\dots,n\}$ such that $i\neq j$. Then the matrix $t_{ij}(x):=e+xe^{ij}\in\GL_n(R)$ is called an {\it elementary transvection}. The subgroup $\E_n(R)$ of $\GL_n(R)$ generated by the elementary transvections is called the {\it elementary subgroup}.
\end{definition}

\begin{lemma}\label{lemelrel}
The relations
\begin{align*}
t_{ij}(x)t_{ij}(y)&=t_{ij}(x+y), \tag{R1}\\
[t_{ij}(x),t_{hk}(y)]&=e \tag{R2}\text{ and}\\
[t_{ij}(x),t_{jk}(y)]&=t_{ik}(xy) \tag{R3}\\
\end{align*}
hold where $i\neq k, j\neq h$ in $(R2)$ and $i\neq k$ in $(R3)$.
\end{lemma}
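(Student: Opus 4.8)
The three relations are matrix identities that follow from direct computation, so the plan is to reduce everything to the single multiplication rule for matrix units and then track which terms survive under the stated index constraints. The one computational fact I would establish first is
\[
(xe^{ij})(ye^{kl}) = \delta_{jk}\, xy\, e^{il}\qquad(x,y\in R),
\]
where $\delta$ denotes the Kronecker symbol; this is immediate from the definition of $e^{ij}$ and requires only care that the ring scalars $x,y$ are multiplied in the correct order, since $R$ need not be commutative.

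Relation (R1) then comes out at once: expanding $t_{ij}(x)t_{ij}(y)=(e+xe^{ij})(e+ye^{ij})$ produces a cross term $xy\,e^{ij}e^{ij}=\delta_{ji}\,xy\,e^{ij}$, which vanishes because $i\neq j$, leaving $e+(x+y)e^{ij}=t_{ij}(x+y)$. As an immediate corollary (taking $y=-x$) I get $t_{ij}(x)^{-1}=t_{ij}(-x)$, which I will use to write out the commutators.

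For (R2) and (R3) I would compute the commutator as $[A,B]=(AB)(A^{-1}B^{-1})$ with $A^{-1},B^{-1}$ supplied by the corollary above, and expand using the multiplication rule. In case (R2), with $A=t_{ij}(x)$ and $B=t_{hk}(y)$, the hypotheses $j\neq h$ and $i\neq k$ force both $e^{ij}e^{hk}$ and $e^{hk}e^{ij}$ to vanish, so $A$ and $B$ commute and $[A,B]=e$. In case (R3), with $A=t_{ij}(x)$ and $B=t_{jk}(y)$, the only surviving cross term is $e^{ij}e^{jk}=e^{ik}$; carrying out the four-factor product and repeatedly using $i\neq k$ (and $j\neq k$, which holds since $t_{jk}(y)$ is a transvection) to kill the remaining terms leaves exactly $e+xy\,e^{ik}=t_{ik}(xy)$.

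There is no genuine obstacle here: the whole lemma is a finite bookkeeping exercise. The only thing I would watch is discipline in applying the index conditions, since every vanishing term is justified by one of $i\neq j$, $h\neq k$, $i\neq k$, or $j\neq h$, and in a noncommutative ring I must keep the products in the order $xy$ dictated by the multiplication rule rather than accidentally writing $yx$.
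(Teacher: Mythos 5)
Your proposal is correct and takes essentially the same approach as the paper, whose entire proof of this lemma is the phrase ``Straightforward computation'': you simply carry that computation out explicitly, reducing to the matrix-unit rule $(xe^{ij})(ye^{kl})=\delta_{jk}\,xy\,e^{il}$, deriving $t_{ij}(x)^{-1}=t_{ij}(-x)$ from (R1), and correctly invoking each index constraint (including the implicit $i\neq j$ and $j\neq k$) to kill the spurious terms while respecting the noncommutative order of $x$ and $y$.
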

\begin{proof}
Straightforward computation.
\end{proof}

\begin{definition}\label{defp}
Let $i,j\in\{1,\dots,n\}$ such that $i\neq j$. Then the matrix $p_{ij}:=e+e^{ij}-e^{ji}-e^{ii}-e^{jj}=t_{ij}(1)t_{ji}(-1)t_{ij}(1)\in \E_{n}(R)$ is called a {\it generalised permutation matrix}. It is easy to show that $p_{ij}^{-1}=p_{ji}$. The subgroup of $\E_n(R)$ generated by the generalised permutation matrices is denoted by $\Perm_n(R)$.
\end{definition}

\begin{lemma}\label{lemp}
Suppose $n\geq 3$. Let $x\in R$ and $i,j,i',j'\in\{1,\dots,n\}$ such that $i\neq j$ and $i'\neq j'$. Then there is a $\tau\in \Perm_n(R)$ such that $t_{ij}(x)^{\tau}=t_{i'j'}(x)$.
\end{lemma}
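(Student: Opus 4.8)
The plan is to reduce the general case to permuting indices by transpositions, and then to repair a possible sign discrepancy using a diagonal matrix whose existence requires a third index, which is precisely where the hypothesis $n\geq 3$ is used.

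First I would compute the effect of conjugating a single transvection by a single generalised permutation matrix. Writing $p_{kl}=e+e^{kl}-e^{lk}-e^{kk}-e^{ll}$ and using $p_{kl}^{-1}=p_{lk}$, a direct calculation with the matrix units $e^{ij}$ (in the spirit of Lemma \ref{lemelrel}) shows that
\[
t_{ij}(x)^{p_{kl}}=t_{i'j'}(\varepsilon x),
\]
where $(i',j')$ is obtained from $(i,j)$ by applying the transposition $(k\ l)$ to each of the two indices, and $\varepsilon\in\{1,-1\}$ is a sign that can be read off explicitly from whether $i$ or $j$ equals $l$. Since $i\neq j$ forces $i'\neq j'$, the right-hand side is again a genuine elementary transvection. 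In the same computation I would record that $p_{kl}^2=e-2e^{kk}-2e^{ll}$ is the diagonal matrix with $-1$ in positions $k,l$ and $1$ elsewhere, so that conjugation by $p_{kl}^2$ sends $t_{ij}(x)$ to $t_{ij}(\delta x)$, where $\delta=-1$ precisely when exactly one of $i,j$ lies in $\{k,l\}$.

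Next, since the transpositions $(k\ l)$ generate the symmetric group on $\{1,\dots,n\}$, and conjugation by a product of generalised permutation matrices realises the corresponding product of transpositions on the indices of a transvection, I would choose a permutation $\pi$ with $\pi(i)=i'$ and $\pi(j)=j'$; such a $\pi$ exists because $i\neq j$, $i'\neq j'$ and the two complements have the same size $n-2$. Writing $\pi$ as a product of transpositions and letting $\tau_0\in\Perm_n(R)$ be the corresponding product of generalised permutation matrices, the first paragraph yields
\[
t_{ij}(x)^{\tau_0}=t_{i'j'}(\varepsilon x)
\]
for some sign $\varepsilon\in\{1,-1\}$. If $\varepsilon=1$ we are done with $\tau=\tau_0$.

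If $\varepsilon=-1$, this is where $n\geq 3$ enters: I would pick an index $l\in\{1,\dots,n\}\setminus\{i',j'\}$, which exists exactly because $n\geq 3$, and conjugate once more by $p_{i'l}^2\in\Perm_n(R)$. Since exactly one of $i',j'$ (namely $i'$) lies in $\{i',l\}$, the sign computation above gives $t_{i'j'}(\varepsilon x)^{p_{i'l}^2}=t_{i'j'}(-\varepsilon x)=t_{i'j'}(x)$. Hence $\tau:=\tau_0\,p_{i'l}^2\in\Perm_n(R)$ satisfies $t_{ij}(x)^{\tau}=t_{i'j'}(x)$, as required. I expect the main obstacle to be the careful bookkeeping of the signs $\varepsilon$ and $\delta$ in the opening computation; once these are pinned down, the reduction via permutations and the final sign correction are routine.
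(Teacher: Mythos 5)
Your proof is correct, and since the paper dismisses this lemma with ``Easy exercise,'' your argument supplies exactly the standard reasoning intended: conjugation by $p_{kl}$ permutes the indices of a transvection up to a sign (indeed $t_{ij}(x)^{p_{kl}}=t_{\pi(i)\pi(j)}(\varepsilon x)$ with $\pi=(k\ l)$ and $\varepsilon=-1$ precisely when $i=l$ or $j=l$, as your case analysis with matrix units confirms), a permutation sending $(i,j)$ to $(i',j')$ is realised by a product of such matrices, and the residual sign is killed by conjugating with the diagonal matrix $p_{i'l}^2=e-2e^{i'i'}-2e^{ll}$. You also correctly locate the role of $n\geq 3$ in the choice of the auxiliary index $l\notin\{i',j'\}$; this is genuinely necessary, since for $n=2$ the group $\Perm_2(R)$ is generated by $p_{12}$ alone with $p_{12}^2=-e$ central, so the sign discrepancy cannot in general be repaired there.
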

\begin{proof}
Easy exercise.
\end{proof}

\subsection{Relative subgroups}

In this subsection $I\lhd R$ denotes an ideal.

\begin{definition}
An elementary transvection $t_{ij}(x)$ is called $I$-{\it elementary} if $x\in I$. The subgroup $\E_n(I)$ of $\E_n(R)$ generated by all $I$-elementary transvections is called the {\it preelementary subgroup of level} $I$. Its normal closure $E_n(R,I)$ in $E_n(R)$ is called the {\it elementary subgroup of level} $I$.
\end{definition}

\begin{definition}
Let $\phi:\GL_n(R)\rightarrow \GL_n(R/I)$ be the group homomorphism induced by the canonical map $R\rightarrow R/I$. 
The group $\C_n(R,I):=\phi^{-1}(\Center(\GL_n(R/I))$ is called the {\it full congruence subgroup of level} $I$. 
\end{definition}

\begin{remark}\label{rmkcong}
Let $\sigma\in \GL_n(R)$. Then clearly $\sigma\in \C_n(R,I)$ iff $\sigma_{ij}\in I~(1\leq i\neq j\leq n)$ and $a\sigma_{ii}-\sigma_{jj}a\in I~(1\leq i,j \leq n, a\in R)$. 
\end{remark}

\section{Normality of relative elementary subgroups}


\begin{lemma}\label{lemtransv}
Let $I\lhd R$ be an ideal. Let $u\in R^n$ and $v\in{^n}\!R$ such that $vu = 0$. Suppose that $v$ has a zero entry and that all the other entries of $v$ lie in $I$. Then $e+uv\in \E_n(R,I)$
\end{lemma}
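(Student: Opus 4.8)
The plan is to write $e+uv$ as a product of two matrices, each of which lies in $\E_n(R,I)$ by a separate argument. Fix an index $k$ with $v_k=0$; by hypothesis $v_j\in I$ for every $j\neq k$, and $vu=\sum_{i\neq k}v_iu_i=0$. Let $u'$ be the column obtained from $u$ by replacing its $k$-th entry by $0$. Since $v_k=0$, a direct computation gives the splitting
\[uv=u'v+\sum_{j\neq k}u_kv_je^{kj},\]
and both cross products of the two summands vanish (one because $v_k=0$, the other because $vu=0$), so that
\[e+uv=(e+u'v)\Bigl(e+\sum_{j\neq k}u_kv_je^{kj}\Bigr).\]
It therefore suffices to show that each factor lies in $\E_n(R,I)$.

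The second factor is straightforward: since $e^{kj}e^{kj'}=0$ whenever $j,j'\neq k$, one has $e+\sum_{j\neq k}u_kv_je^{kj}=\prod_{j\neq k}t_{kj}(u_kv_j)$, a commuting product of $I$-elementary transvections (each $u_kv_j\in I$ because $I$ is an ideal), hence an element of $\E_n(R,I)$. For the first factor I would express it as a commutator. Set $N:=\sum_{i\neq k}u_ie^{ik}$ and $M:=\sum_{j\neq k}v_je^{kj}$, and put $\alpha:=\prod_{i\neq k}t_{ik}(u_i)=e+N$ and $\beta:=\prod_{j\neq k}t_{kj}(v_j)=e+M$. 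The claim is that
\[[\alpha,\beta]=e+u'v.\]
This rests on the relations $N^2=M^2=0$ (immediate from the index pattern) and, crucially, $MN=\bigl(\sum_{i\neq k}v_iu_i\bigr)e^{kk}=0$, which is exactly where the hypothesis $vu=0$ enters; one also checks $NM=u'v$. Expanding $(e+N)(e+M)(e-N)(e-M)$ and discarding every term containing $N^2$, $M^2$, or the product $MN$ as a consecutive factor leaves precisely $e+NM=e+u'v$.

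Finally, $\beta\in\E_n(R,I)$ since each $v_j\in I$, and $\E_n(R,I)$ is by definition normal in $\E_n(R)$; hence $[\alpha,\beta]=(\alpha\beta\alpha^{-1})\beta^{-1}$ is a product of two elements of $\E_n(R,I)$ (note $\alpha\in\E_n(R)$), so $e+u'v\in\E_n(R,I)$. Combining the two factors yields $e+uv\in\E_n(R,I)$. I expect the only delicate point to be the bookkeeping in the commutator expansion, where the vanishing of $MN$—equivalently $\sum_{i\neq k}v_iu_i=0$—must be used to annihilate all the unwanted terms; the hypotheses ``$v_k=0$'' and ``$v_j\in I$ for $j\neq k$'' are used, respectively, to make the two displayed factorizations valid and to place the relevant transvections at level $I$. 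The degenerate case $n=1$ forces $v=0$, whence $e+uv=e$, so nothing is lost by assuming the products above are nonempty.
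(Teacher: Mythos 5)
Your proof is correct and, modulo bookkeeping, is essentially the paper's argument: the paper quotes from \cite{stepanov-vavilov} the formula (with its index $j$ renamed to your $k$) $e+uv=\bigl(\prod_{i\neq k}t_{ki}(v_i)\bigr)^{\prod_{i\neq k}t_{ik}(-u_i)}\cdot\prod_{i\neq k}t_{ki}((u_k-1)v_i)$, which coincides exactly with your factorization $e+uv=[\alpha,\beta]\cdot\prod_{j\neq k}t_{kj}(u_kv_j)$ once $\beta^{-1}$ is absorbed into the final block via $\beta^{-1}(e+u_kM)=e+(u_k-1)M$, both resting on the same conjugation of the row-$k$ transvections $\beta=e+M$ by the column-$k$ transvections $\alpha=e+N$. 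The only difference is that you verify the underlying identity from scratch (through $N^2=M^2=MN=0$, with $MN=0$ encoding $vu=0$) where the paper defers to the cited lemma, so your write-up is self-contained but not a genuinely different route.
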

\begin{proof}
Suppose $v_j=0$. The proof of \cite[Lemma 3]{stepanov-vavilov} shows that
\[e+uv=(\prod\limits_{i\neq j}t_{ji}(v_i)^{\prod\limits_{i\neq j}t_{ij}(-u_i)})\prod\limits_{i\neq j}t_{ji}((u_j-1)v_i)\in \E_n(R,I).\]
\end{proof}

Recall that a row $v\in {^n}\!R$ is called {\it unimodular} if there is a column $w\in R^n$ such that $vw=1$.

\begin{proposition}\label{proptransv}
Suppose $R$ is an exchange ring and $n\geq 2$. Let $I\lhd R$ be an ideal and $x\in I$. Let $u\in R^n$ be a column and $v\in{^n}\!R$ a unimodular row such that $vu = 0$. Then $e+uxv\in \E_n(R,I)$
\end{proposition}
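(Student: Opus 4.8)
The plan is to reduce the statement to Lemma \ref{lemtransv} by writing $e+uxv$ as a product of factors, each of the form $e+u'v'$ with $v'u'=0$, with all entries of the row $v'$ in $I$ and at least one entry equal to $0$. The starting point is that, because $vu=0$, any product of two matrices of the shape $uxav$ (with $a\in R$) vanishes: indeed $(uxav)(uxbv)=uxa(vu)xbv=0$. Hence for any finite family $\varepsilon_1,\dots,\varepsilon_m\in R$ with $\varepsilon_1+\dots+\varepsilon_m=1$ one obtains the telescoping identity
\[e+uxv=\prod_{k=1}^{m}\bigl(e+ux\varepsilon_k v\bigr),\]
since on expanding the product every term involving two or more of the factors $ux\varepsilon_k v$ collapses to $0$, while the surviving linear terms sum to $ux(\varepsilon_1+\dots+\varepsilon_m)v=uxv$. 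In each factor the row $x\varepsilon_k v$ already has all its entries in $I$ (as $x\in I$) and is orthogonal to the column $u$, because $(x\varepsilon_k v)u=x\varepsilon_k(vu)=0$.

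The exchange hypothesis enters through the choice of the $\varepsilon_k$. Since $v$ is unimodular there is a column $w\in R^n$ with $vw=1$, that is $v_1w_1+\dots+v_nw_n=1$. Applying property $(1)$ to this equation, I obtain orthogonal idempotents $\varepsilon_i\in v_iw_iR\subseteq v_iR$ with $\varepsilon_1+\dots+\varepsilon_n=1$. Feeding these into the identity above expresses $e+uxv$ as a product of the $n$ factors $e+ux\varepsilon_i v$, so it suffices to show that each of these lies in $\E_n(R,I)$.

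For a single factor the idea is that $\varepsilon_i\in v_iw_iR$ makes the $i$-th coordinate dominant: writing $\varepsilon_i=v_ib_i$, the orthogonality relations $\varepsilon_i\varepsilon_j=0$ translate into relations among the entries $x\varepsilon_i v_j$ of the row $x\varepsilon_i v$ that should allow me to clear all but the $i$-th entry. Since $\E_n(R,I)$ is by definition normal in $\E_n(R)$, I am free to conjugate each factor by any $g\in\E_n(R)$ without affecting membership; such a conjugation replaces $(u,\,x\varepsilon_i v)$ by $(g^{-1}u,\,x\varepsilon_i v g)$, and this preserves both the orthogonality $(x\varepsilon_i v g)(g^{-1}u)=x\varepsilon_i(vu)=0$ and the containment of the row in $I$. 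Choosing $g$ to be a suitable product of elementary transvections realises the desired column operations, and once a zero entry has been produced in the row, Lemma \ref{lemtransv} places the conjugated factor, hence the factor itself, in $\E_n(R,I)$.

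The main obstacle is exactly this last step, the production of the zero entry. Unlike the stable rank $1$ situation, a unimodular row over an exchange ring cannot in general be reduced to $(1,0,\dots,0)$ by elementary transformations, so one cannot simply diagonalise $v$ at the outset; the entire point of passing to the idempotent pieces $\varepsilon_i$ is that, after localising at the corner where the $i$-th coordinate becomes unit-like, the remaining coordinates of $\varepsilon_i v$ do become removable. Carrying this out precisely, that is, exhibiting the explicit elementary factors that annihilate the off-diagonal entries of $x\varepsilon_i v$ by means of the idempotent relations, is the crux of the argument; the hypothesis $n\geq 2$ is needed here because at least one coordinate other than the $i$-th must be present to carry the zero.
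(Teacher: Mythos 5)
Your reduction is exactly the paper's: the telescoping identity $e+uxv=\prod_k(e+ux\varepsilon_k v)$ (valid because $vu=0$ kills every cross term), with the idempotents $\varepsilon_i\in v_iw_iR$ supplied by property (1) applied to $v_1w_1+\dots+v_nw_n=1$, followed by conjugation inside $\E_n(R)$ and an appeal to Lemma \ref{lemtransv}. But your writeup stops precisely at the only nontrivial verification: you say that exhibiting the elementary factors which produce the zero entry of the row ``is the crux of the argument'' without exhibiting them, so as written the proof is incomplete. That is a genuine gap, since everything before it is routine and the idempotency of the $\varepsilon_i$ has not yet been used anywhere (the telescoping needs only $\sum_k\varepsilon_k=1$; orthogonality is in fact never needed).

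The gap is, however, one line wide, and your own setup fills it. Write $\varepsilon_i=v_ib_i$ with $b_i=w_ir_i$ as property (1) provides. Fix a single $j\neq i$ (possible as $n\geq 2$) and conjugate the $i$-th factor by the single transvection $g=t_{ij}(-b_iv_j)$, obtaining $e+u'v'$ with $u'=g^{-1}u$ and $v'=x\varepsilon_i v\,g$. Then
\[v'_j=x\varepsilon_iv_j-x\varepsilon_iv_ib_iv_j=x\varepsilon_iv_j-x\varepsilon_i^2v_j=0,\]
using $v_ib_i=\varepsilon_i$ and $\varepsilon_i^2=\varepsilon_i$; moreover $v'u'=x\varepsilon_i(vu)=0$ and every entry of $v'$ lies in $I$ since $x\in I$. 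Lemma \ref{lemtransv} then gives $e+u'v'\in\E_n(R,I)$, and normality of $\E_n(R,I)$ in $\E_n(R)$ returns the unconjugated factor to $\E_n(R,I)$. This is verbatim the paper's choice of conjugator $t_{ij}(-w_ir_iv_j)$. Note also that you overshoot in aiming to ``clear all but the $i$-th entry'': Lemma \ref{lemtransv} requires only \emph{one} zero entry in the row, so a single transvection suffices and no product of column operations or localisation picture is needed. With this computation inserted, your argument coincides with the paper's proof.
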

\begin{proof}
Let $w\in R^n$ such that $vw=1$. Since $R$ has property (1), there are elements $r_1,\dots,r_n\in R$ and idempotents $e_1, \dots,e_n\in R$ such that $e_i=v_iw_ir_i~(1\leq i\leq n)$ and $e_1 + \dots + e_n = 1$. Let now $i\in\{1,\dots,n\}$ and set $\tau(i):=e+uxe_iv$. Choose a $j\neq i$. Then 
\[\tau(i)^{t_{ij}(-w_ir_iv_j)}=e+u'v'\]
where $u'=t_{ij}(w_ir_iv_j)u$ and $v'=xe_ivt_{ij}(-w_ir_iv_j)$. Clearly $v'_j=0$. It follows from Lemma \ref{lemtransv} that $e+u'v'\in\E_n(R,I)$ and hence $\tau(i)\in\E_n(R,I)$. Thus $e+uxv=\tau(1)\dots\tau(n)\in E_n(R,I)$. 
\end{proof}

\begin{proof1}
The case $n=1$ is trivial, so let us assume that $n\geq 2$. Let $t_{ij}(x)$ be an $I$-elementary transvection and $\sigma\in \GL_n(R)$. Then $t_{ij}(x)^\sigma=e+\sigma'_{*i}x\sigma_{j*}$. By Proposition \ref{proptransv} we have $e+\sigma'_{*i}x\sigma_{j*}\in \E_n(R,I)$. Thus $\E_n(R,I)$ is normal.
\end{proof1}

\begin{remark}
Using the proofs of Lemma \ref{lemtransv} and Proposition \ref{proptransv} it is easy to obtain an explicit expression of a matrix $t_{ij}(x)^\sigma$, where $\sigma\in\GL_n(R), i\neq j, x\in R$, as a product of $4n^2-3n$ elementary transvections.
\end{remark}

\section{The standard commutator formula}

The following result is well-known, see e.g. \cite[Lemma 1]{stepanov-vavilov}. It follows from relation (R3) in Lemma \ref{lemelrel}.

\begin{lemma}\label{lemstcom}
Suppose $n\geq 3$. Then $\E_n(R,I)=[\E_n(R),\E_n(R,I)]$ for any ideal $I\lhd R$. In particular, the absolute elementary subgroup $\E_n(R)$ is perfect. 
\end{lemma}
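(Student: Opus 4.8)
The plan is to prove the two inclusions separately; write $N:=[\E_n(R),\E_n(R,I)]$. The inclusion $N\subseteq\E_n(R,I)$ is immediate from the fact that $\E_n(R,I)$, being by definition the normal closure of $\E_n(I)$ in $\E_n(R)$, is normal in $\E_n(R)$. Indeed, for $g\in\E_n(R)$ and $h\in\E_n(R,I)$ we have $ghg^{-1}\in\E_n(R,I)$, hence $[g,h]=(ghg^{-1})h^{-1}\in\E_n(R,I)$; as such commutators generate $N$, we obtain $N\subseteq\E_n(R,I)$.

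For the reverse inclusion I would first show that every $I$-elementary transvection lies in $N$. This is where the hypothesis $n\geq 3$ and relation (R3) do the work: given $x\in I$ and indices $i\neq k$, choose (using $n\geq 3$) an index $j\notin\{i,k\}$ and factor $x=1\cdot x$. Then (R3) gives $t_{ik}(x)=[t_{ij}(1),t_{jk}(x)]$, a commutator of an element of $\E_n(R)$ with an element of $\E_n(I)\subseteq\E_n(R,I)$, so $t_{ik}(x)\in N$. Thus the generating set of $\E_n(I)$ lies in $N$.

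To upgrade this to $\E_n(R,I)\subseteq N$, I would invoke the general group-theoretic fact that $[G,H]$ is normal in $G$ whenever $H\lhd G$: for $g,h,x$ one has $x[g,h]x^{-1}=[xgx^{-1},xhx^{-1}]$, and $xhx^{-1}\in H$ by normality. Applied with $G=\E_n(R)$ and $H=\E_n(R,I)$, this shows $N\lhd\E_n(R)$. Since $N$ is a normal subgroup of $\E_n(R)$ containing every $I$-elementary transvection, it contains their normal closure in $\E_n(R)$, which is exactly $\E_n(R,I)$. Combining the two inclusions yields $\E_n(R,I)=N$. The ``in particular'' assertion then follows by specialising to $I=R$: since $\E_n(R,R)=\E_n(R)$, the identity reads $\E_n(R)=[\E_n(R),\E_n(R)]$, so $\E_n(R)$ is perfect.

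I do not expect a genuine obstacle here, the statement being classical; the only point requiring care is recognising that the passage from ``the transvections lie in $N$'' to ``the whole normal closure lies in $N$'' is not automatic and relies on the normality of $N$ in $\E_n(R)$.
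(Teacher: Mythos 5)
Your proof is correct and follows exactly the route the paper indicates: the paper states the lemma as well-known (citing Stepanov--Vavilov) and remarks that it follows from relation (R3), which is precisely the decomposition $t_{ik}(x)=[t_{ij}(1),t_{jk}(x)]$ you use, combined with the standard normality observations. Your write-up simply spells out the details the paper leaves to the reference, including the genuinely necessary step that $[\E_n(R),\E_n(R,I)]$ is normal in $\E_n(R)$ before passing to the normal closure.
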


\begin{proposition}\label{propstcom}
Suppose $R$ is an exchange ring. Then $[\E_n(R),\C_n(R,I)]\subseteq \E_n(R,I)$ for any ideal $I\lhd R$.
\end{proposition}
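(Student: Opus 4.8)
The plan is to pass to the quotient $\GL_n(R)/\E_n(R,I)$, which is legitimate because $\E_n(R,I)\lhd\GL_n(R)$ by Theorem \ref{thmtransv}. In this quotient it suffices to show that the images of $\E_n(R)$ and $\C_n(R,I)$ commute. Since the centraliser of the image of $\C_n(R,I)$ is a subgroup and $\E_n(R)$ is generated by the elementary transvections, this reduces everything to a single statement: for every transvection $t_{ij}(x)$ (with $x\in R$ arbitrary) and every $\sigma\in\C_n(R,I)$, one has $[t_{ij}(x),\sigma]\in\E_n(R,I)$.

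Next I would write out this commutator. Using the formula $t_{ij}(y)^{\sigma^{-1}}=e+\sigma_{*i}\,y\,\sigma'_{j*}$ from the proof of Theorem \ref{thmtransv}, one gets $[t_{ij}(x),\sigma]=t_{ij}(x)\,(e-uxv)$, where $u:=\sigma_{*i}$ and $v:=\sigma'_{j*}$. Three structural facts are crucial and follow directly: $vu=\sigma'_{j*}\sigma_{*i}=(\sigma^{-1}\sigma)_{ji}=0$; the row $v$ is unimodular, since $v\,\sigma_{*j}=(\sigma^{-1}\sigma)_{jj}=1$; and, because both $\sigma$ and $\sigma^{-1}$ lie in $\C_n(R,I)$, Remark \ref{rmkcong} gives $u_k\in I$ for $k\neq i$ and $v_l\in I$ for $l\neq j$. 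In other words $u$ and $v$ are diagonal modulo $I$, and the $(i,j)$-behaviour is governed by the units-mod-$I$ entries $u_i,v_j$.

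The engine of the argument is an exchange-ring decomposition. Applying property (1) to the identity $v\,\sigma_{*j}=1$ produces orthogonal idempotents $e_1,\dots,e_n$ with $\sum_l e_l=1$ and $e_l\in v_l(\sigma_{*j})_lR$; since $v_l\in I$ for $l\neq j$ we obtain $e_l\in I$ for $l\neq j$, and hence $e_j\equiv 1\ (\mathrm{mod}\ I)$. Factoring $t_{ij}(x)=\prod_l t_{ij}(xe_l)$ via $(R1)$, a standard commutator identity together with the normality of $\E_n(R,I)$ lets me treat each factor $[t_{ij}(xe_l),\sigma]$ separately. For $l\neq j$ the parameter $xe_l$ lies in $I$, so $t_{ij}(xe_l)$ and its $\sigma$-conjugate are both in $\E_n(R,I)$ and the factor is trivially handled. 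This isolates the single diagonal term $[t_{ij}(c),\sigma]=(e+ucv)\,t_{ij}(-c)$ with $c:=xe_j$.

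The remaining term is where the real work lies, and I expect it to be the main obstacle. Following the conjugation device from the proof of Proposition \ref{proptransv}, I would conjugate $e+ucv$ by a transvection $t_{ji}(s)$ with $s\in I$ (admissible precisely because $v_i\in I$), which, modulo $\E_n(R,I)$, replaces the long transvection by a matrix $e+u'v'$ whose row $v'$ has vanishing $i$-th entry and still satisfies $v'u'=0$. Because $v'_i=0$, the trailing $t_{ij}(-c)$ decouples from the rank-one part and the product collapses to $e+u'v'-c\,e^{ij}$. One then peels off factors of $\E_n(R,I)$ by repeated use of Lemma \ref{lemtransv}: first the piece supported on the $i$-th row (a row with a zero entry and the remaining entries in $I$), leaving a residual matrix $e+\tilde u\,c\,\hat v$ whose \emph{column} $\tilde u$ now lies entirely in $I$ while $\hat v$ retains its zero $i$-th entry. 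The delicate point is that the original orthogonality $vu=0$ does not survive intact under this splitting — the cross terms that arise are only guaranteed to lie in $I$, not to vanish — so verifying that the idempotent relation $e_j^2=e_j$ and the surviving identity $v'u'=0$ conspire to annihilate exactly the right products, keeping every peeled factor genuinely of the form covered by Lemma \ref{lemtransv}, is the crux of the whole proof.
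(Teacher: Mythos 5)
Your reduction is exactly the paper's: the same application of property (1) to $\sum_l\sigma'_{jl}\sigma_{lj}=(\sigma^{-1}\sigma)_{jj}=1$, the same idempotents $e_l$ with $e_l\in I$ for $l\neq j$, the same splitting $t_{ij}(x)=\prod_l t_{ij}(xe_l)$, and the same easy disposal of the factors with $l\neq j$ (the paper handles these by hand via Lemma \ref{lemtransv} rather than by citing Theorem \ref{thmtransv}, but your appeal to normality is legitimate, since Theorem \ref{thmtransv} is proved beforehand and independently). The problem is that the one factor $[t_{ij}(xe_j),\sigma]$ you defer is not a routine verification: it is the entire content of the proposition, and the paper spends the bulk of its proof (Case 2, equations (2)--(4)) on precisely this term. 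Your sketch of it does not go through as written.

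Two concrete obstacles. First, the conjugating parameter $s$ is not merely required to lie in $I$: to achieve $v'_i=0$ you need $xe_j\sigma'_{ji}+xe_j\sigma'_{jj}s=0$ \emph{exactly}, and this is where the idempotent must enter --- $s=-\sigma_{jj}r_j\sigma'_{ji}$ works because $e_j=\sigma'_{jj}\sigma_{jj}r_j$ and $e_j(1-e_j)=0$ --- but you never exhibit such an $s$, and $s\in I$ alone does not produce one. Second, and more seriously, the peeling cannot be completed with the tools you allow yourself. Removing the row-$i$ piece already needs the congruence $\sigma_{ii}xe_j\sigma'_{jj}\equiv xe_j \bmod I$ (the computation via Remark \ref{rmkcong} that the paper carries out and your sketch omits), and after it is removed the residual matrix is $e+\tilde u\hat v$ with $\hat v\tilde u=0$, where the entries lying in $I$ are those of the \emph{column} $\tilde u$, while the row $\hat v$ retains the non-ideal entry $xe_j\sigma'_{jj}$ at position $j$. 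Lemma \ref{lemtransv} requires the off-zero entries of the \emph{row} to lie in $I$, so it does not apply to this factor; you would need a transposed (opposite-ring) analogue, which the paper neither states nor proves. The paper avoids this trap by conjugating with $\xi=\prod_{l\neq j}t_{jl}(-\sigma_{jj}r_j\sigma'_{jl})$, killing \emph{all} entries $v_l$ with $l\neq j$ at once, so that the residue factors into column transvections $t_{lj}(\cdot)$ with parameters in $I$ together with a single diagonal factor $e+ze^{jj}$, and the latter is dispatched by the factorisation $z=ab$ with $b\in I$ and $ba=0$, which does fit the row form of Lemma \ref{lemtransv}. Until you either prove the column analogue of Lemma \ref{lemtransv} or redo the reduction along the paper's lines, your argument has a genuine gap at its (self-acknowledged) crux.
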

\begin{proof}
The case $n=1$ is trivial. Suppose now that $n\geq 2$ and let $\sigma\in \C_n(R,I)$, $1\leq i\neq j\leq n$ and $x\in R$. We have to show that $[t_{ij}(x),\sigma]\in\E_n(R,I)$. Since $R$ has property (1), there are elements $r_1,\dots,r_n\in R$ and idempotents $e_1, \dots,e_n\in R$ such that $e_k=\sigma'_{jk}\sigma_{kj}r_k~(1\leq k\leq n)$ and $e_1 + \dots + e_n = 1$. Let now $k\in\{1,\dots,n\}$ and set 
\[\tau(k):=[t_{ij}(xe_k),\sigma]=t_{ij}(xe_k)(e-\sigma_{*i}xe_k\sigma'_{j*}).\]
{\bf Case 1} Suppose $k\neq j$. Choose an $l\neq k$ and set $\xi:=t_{kl}(-\sigma_{kj}r_k\sigma'_{jl})$. Then $\tau(k)^{\xi}=(t_{ij}(xe_k)^{\xi})(e+uv)$ where $u=-\xi^{-1}\sigma_{*i}$ and $v=xe_k\sigma'_{j*}\xi$. Since $k\neq j$ we have $e_k\in I$ and hence $t_{ij}(xe_k)^{\xi}\in \E_n(R,I)$. Moreover, $v_l=0$ and hence $e+uv\in\E_n(R,I)$ by Lemma \ref{lemtransv}. It follows that $\tau(k)\in\E_n(R,I)$.\\
\\
{\bf Case 2} Suppose $k=j$. set $\xi:=\prod\limits_{l\neq j}t_{jl}(-\sigma_{jj}r_j\sigma'_{jl})$. Then 
\begin{equation}
\tau(j)^{\xi}=(t_{ij}(xe_j)^{\xi})(e+uv)
\end{equation}
where $u=-\xi^{-1}\sigma_{*i}$ and $v=xe_j\sigma'_{j*}\xi$. Clearly 
\begin{equation}
t_{ij}(xe_j)^{\xi}=[\xi^{-1},t_{ij}(xe_j)]t_{ij}(xe_j).
\end{equation}
Moreover, $v_l=0$ for any $l\neq j$. One checks easily that 
\begin{equation}
t_{ij}(xe_j)(e+uv)=(e+ze^{jj})t_{ij}(xe_j(1+z)-\sigma_{ii}xe_j\sigma'_{jj})\prod\limits_{l\neq i,j} t_{lj}(-\sigma_{li}xe_j\sigma'_{jj})
\end{equation}
where $z=-(\sigma_{ji}+\sum\limits_{l\neq j}\sigma_{jj}r_j\sigma'_{jl}\sigma_{li})xe_j\sigma'_{jj}\in I$. It follows from equations (2)-(4) that 
\begin{equation*}
\tau(j)^{\xi}=[\xi^{-1},t_{ij}(xe_j)](e+ze^{jj})t_{ij}(xe_j(1+z)-\sigma_{ii}xe_j\sigma'_{jj})\prod\limits_{l\neq i,j} t_{lj}(-\sigma_{li}xe_j\sigma'_{jj}).
\end{equation*}
Clearly $[\xi^{-1},t_{ij}(xe_j)]$ and $\prod\limits_{l\neq i,j} t_{lj}(-\sigma_{li}xe_j\sigma'_{jj})$ lie in $\E_n(R,I)$ (note that $\xi\in \E_n(R,I)$). Moreover, 
\begin{align*}
&xe_j(1+z)-\sigma_{ii}xe_j\sigma'_{jj}\\
\equiv&xe_j-\sigma_{ii}xe_j\sigma'_{jj}\\
\equiv&xe_j-xe_j\sigma_{jj}\sigma'_{jj}\\
=&xe_j(1-\sigma_{jj}\sigma'_{jj})\\
=&xe_j(\sum\limits_{l\neq j}\sigma_{jl}\sigma'_{lj})\\
\equiv& 0 \bmod I
\end{align*}
by Remark \ref{rmkcong}, and hence $t_{ij}(xe_j(1+z)-\sigma_{ii}xe_j\sigma'_{jj})\in \E_n(R,I)$. It remains to show that $e+ze^{jj}\in \E_n(R,I)$. Clearly 
\begin{align*}
z=&-(\sigma_{ji}+\sum\limits_{l\neq j}\sigma_{jj}r_j\sigma'_{jl}\sigma_{li})xe_j\sigma'_{jj}\\
=&-(\sigma_{ji}-\sigma_{jj}r_j\sigma'_{jj}\sigma_{ji})xe_j\sigma'_{jj}\\
=&\underbrace{(\sigma_{jj}r_j\sigma'_{jj}-1)}_{a:=}\underbrace{\sigma_{ji}xe_j\sigma'_{jj}}_{b:=}
\end{align*}
Clearly $b\in I$ and $ba=0$. Let $u'\in R^n$ be the column that has $a$ at position $j$ and zeros elsewhere. Furthermore, let $v' \in {^n}\!R$ be the row that has $b$ at position $j$ and zeros elsewhere. It follows from Lemma \ref{lemtransv} that $e+ze^{jj}=e+u'v'\in\E_n(R,I)$. Hence $\tau(j)\in\E_n(R,I)$.\\
\\
We have shown that $\tau(k)=[t_{ij}(xe_k),\sigma]\in\E_n(R,I)$ for any $1\leq k \leq n$. Since $t_{ij}(x)=t_{ij}(xe_1)\dots t_{ij}(xe_n)$, it follows that $[t_{ij}(x),\sigma]\in \E_n(R,I)$.
\end{proof}

Theorem \ref{thmstcom} follows immediately from Lemma \ref{lemstcom} and Proposition \ref{propstcom}.

\section{Sandwich classification of $\E_n(R)$-normal subgroups}

In Subsection 6.1 we recall the concept of simultaneous reduction in groups, which was introduced in \cite{preusser11}. In Subsection 6.2 we apply this idea to get a result for general linear groups over arbitrary rings, namely Proposition \ref{propsct}. In Subsection 6.3 we use Proposition \ref{propsct} to prove Theorem \ref{thmsct}.

\subsection{Simultaneous reduction in groups}

In this subsection $G$ denotes a group. 
\begin{definition}
Let $(a_1,b_1), (a_2,b_2)\in G\times G$. If there is an $g\in G$ such that 
\[a_2=[a_1^{-1},g]\text{ and }b_2=[g,b_1],\]
then we write $(a_1,b_1)\xrightarrow{g} (a_2,b_2)$. In this case $(a_1,b_1)$ is called {\it reducible to $(a_2,b_2)$ by $g$}. 
\end{definition}

\begin{definition}
If $(a_1,b_1),\dots, (a_{n+1},b_{n+1})\in G\times G$ and $g_1,\dots,g_{n}\in G$  such that 
\[(a_1,b_1) \xrightarrow{g_1} (a_2,b_2)\xrightarrow{g_2}\dots \xrightarrow{g_{n}} (a_{n+1},b_{n+1}),\]
then we write $(a_1,b_1) \xrightarrow{g_1,\dots,g_{n}}(a_{n+1},b_{n+1})$. In this case $(a_1,b_1)$ is called {\it reducible to $(a_{n+1},b_{n+1})$ by $g_1,\dots,g_n$}.
\end{definition}

Let $H$ be a subgroup of $G$. If $g\in G$ and $h\in H$, then we call $g^h$ an {\it $H$-conjugate} of $g$.
\begin{lemma}\label{lemredux}
Let $(a_1,b_1),(a_2,b_2)\in G\times G$. If $(a_1,b_1)\xrightarrow{g_1,\dots,g_n}(a_2,b_2)$ for some $g_1,\dots,g_n\in G$, then $a_2b_2$ is a product of $2^n$ $H$-conjugates of $a_1b_1$ and $(a_1b_1)^{-1}$ where $H$ is the subgroup of $G$ generated by $\{a_1,g_1,\dots, g_n\}$.
\end{lemma}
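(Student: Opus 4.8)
The plan is to proceed by induction on $n$, the number of reduction steps, tracking carefully how the product $a_2 b_2$ can be rewritten in terms of conjugates of $a_1 b_1$ and its inverse. First I would establish the base case $n=1$. Suppose $(a_1,b_1)\xrightarrow{g_1}(a_2,b_2)$, so that $a_2=[a_1^{-1},g_1]$ and $b_2=[g_1,b_1]$. Writing out these commutators and multiplying, I would try to express
\[
a_2 b_2 = [a_1^{-1},g_1]\,[g_1,b_1]
\]
as a product involving $a_1 b_1$ and its inverse, conjugated by elements of the subgroup $H=\langle a_1, g_1\rangle$. The key algebraic manipulation is to recognise that $[a_1^{-1},g_1]=g_1^{-1}a_1^{-1}g_1 a_1$ (using the paper's convention $[g,h]=ghg^{-1}h^{-1}$, so one must be careful with the exact form) and similarly expand $[g_1,b_1]$; after inserting $1 = (a_1 b_1)(a_1 b_1)^{-1}$ at a suitable spot and commuting factors through conjugation, I expect $a_2 b_2$ to emerge as a product of exactly $2 = 2^1$ $H$-conjugates of $a_1 b_1$ and $(a_1 b_1)^{-1}$.

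For the inductive step, I would assume the claim holds for chains of length $n-1$ and consider a chain $(a_1,b_1)\xrightarrow{g_1,\dots,g_n}(a_{n+1},b_{n+1})$. The natural decomposition is to split off the first step: $(a_1,b_1)\xrightarrow{g_1}(a_2,b_2)$, and apply the base-case computation to write $a_2 b_2$ as a product of two conjugates of $a_1 b_1^{\pm 1}$ by elements of $\langle a_1,g_1\rangle$. Then $(a_2,b_2)\xrightarrow{g_2,\dots,g_n}(a_{n+1},b_{n+1})$ is a chain of length $n-1$, and the induction hypothesis yields $a_{n+1}b_{n+1}$ as a product of $2^{n-1}$ conjugates of $a_2 b_2$ and $(a_2 b_2)^{-1}$ by elements of $\langle a_2, g_2,\dots,g_n\rangle$. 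Substituting the base-case expression for $a_2 b_2$ (and its inverse) into each of these $2^{n-1}$ factors replaces each by a product of two conjugates of $a_1 b_1^{\pm 1}$, giving $2\cdot 2^{n-1}=2^n$ factors in total, as required.

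The main obstacle, and the point demanding the most care, is bookkeeping the conjugating subgroup. When I substitute the base-case expression for $a_2 b_2$ into the factors coming from the induction hypothesis, each resulting factor is conjugated by an element of $\langle a_2,g_2,\dots,g_n\rangle$ \emph{and} by an element of $\langle a_1,g_1\rangle$; I must verify that the product of these conjugating elements lies in $H=\langle a_1,g_1,\dots,g_n\rangle$. Since $a_2=[a_1^{-1},g_1]\in\langle a_1,g_1\rangle\subseteq H$, the group $\langle a_2,g_2,\dots,g_n\rangle$ is contained in $H$, so every conjugating element that arises sits inside $H$; this is precisely why the statement is phrased with $H=\langle a_1,g_1,\dots,g_n\rangle$ rather than a smaller group. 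I would therefore organise the argument so that at each stage the conjugators are tracked explicitly and shown to lie in $H$, making the containment $\langle a_2,g_2,\dots,g_n\rangle\subseteq H$ the linchpin that lets the two conjugation operations compose cleanly into a single $H$-conjugation.
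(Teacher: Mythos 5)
Your proposal is correct and follows essentially the same route as the paper: your base-case manipulation (expand the commutators, cancel $g_1^{-1}g_1$, insert $1=a_1a_1^{-1}$) yields exactly the paper's identity $[a_1^{-1},g_1][g_1,b_1]=(a_1b_1)^{g_1^{-1}a_1}\cdot\bigl((a_1b_1)^{-1}\bigr)^{a_1}$, followed by induction on $n$. You in fact supply more detail than the paper, which disposes of the inductive step with ``the general case follows by induction''; your observation that $a_2=[a_1^{-1},g_1]\in\langle a_1,g_1\rangle$ forces $\langle a_2,g_2,\dots,g_n\rangle\subseteq H$ is precisely the bookkeeping that justifies that one-line claim.
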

\begin{proof}
Assume that $n=1$. Then 
\[a_2b_2=[a_1^{-1},g_1][g_1,b_1]=(a_1b_1)^{g_1^{-1}a_1}\cdot((a_1b_1)^{-1})^{a_1}.\]
Hence $a_2b_2$ is a product of two $H$-conjugates of $a_1b_1$ and $(a_1b_1)^{-1}$. The general case follows by induction.
\end{proof}

\subsection{Application to general linear groups}

The proposition below generalises \cite[Proposition 9(ii),(iii)]{preusser11}.
\begin{proposition}\label{propsct}
Suppose that $n\geq 3$ and let $\sigma\in \GL_n(R)$. Suppose that 
\[y\sum\limits_{p=1}^{n}\sigma_{ip}x_p=0\]
for some $i\in\{1,\dots,n\}$ and $x_1,\dots,x_{n},y\in R$ where $x_j=1$ for some $j\in\{1,\dots,n\}$. Then for any $k\neq l$ and $a,b\in R$, the elementary transvection $t_{kl}(ayx_ib)$ is a product of $8$ $\E_n(R)$-conjugates of $\sigma$ and $\sigma^{-1}$.
\end{proposition}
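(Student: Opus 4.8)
The plan is to run the simultaneous reduction machinery of Subsection 6.1 and then invoke Lemma~\ref{lemredux}. Since $8=2^{3}$, I aim for a reduction of length $3$. Concretely, I would search for a matrix $a_{1}\in\E_{n}(R)$ and elements $g_{1},g_{2},g_{3}\in\E_{n}(R)$ such that, putting $b_{1}:=a_{1}^{-1}\sigma$ so that $a_{1}b_{1}=\sigma$, one has
\[(a_{1},b_{1})\xrightarrow{\,g_{1},g_{2},g_{3}\,}(a_{4},b_{4})\qquad\text{with}\qquad a_{4}b_{4}=t_{kl}(ayx_{i}b).\]
Since $a_{1}$ and the $g_{t}$ all lie in $\E_{n}(R)$, the subgroup $H=\langle a_{1},g_{1},g_{2},g_{3}\rangle$ is contained in $\E_{n}(R)$; hence Lemma~\ref{lemredux} shows that $t_{kl}(ayx_{i}b)=a_{4}b_{4}$ is a product of $2^{3}=8$ $\E_{n}(R)$-conjugates of $a_{1}b_{1}=\sigma$ and of $(a_{1}b_{1})^{-1}=\sigma^{-1}$, which is exactly the claim.

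It is convenient to fix favourable indices first. If a matrix $w$ is a product of eight $\E_{n}(R)$-conjugates of $\sigma^{\pm1}$ and $\tau\in\Perm_{n}(R)\subseteq\E_{n}(R)$, then $w^{\tau}$ is again such a product, obtained by conjugating each of the eight factors by $\tau$. Combined with Lemma~\ref{lemp}, which moves $t_{k'l'}(c)$ to $t_{kl}(c)$ for any admissible indices when $n\geq3$, this means I only have to produce the required transvection for one pair $(k,l)$ adapted to the indices $i,j$ of the hypothesis; the general case then follows.

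Now to the construction of the $g_{t}$. The data enter through the column $u:=(x_{1},\dots,x_{n})^{t}\in R^{n}$, for which $u_{j}=1$ and $y\,\sigma_{i*}u=y\sum_{p}\sigma_{ip}x_{p}=0$. The relation $u_{j}=1$ is exactly what is needed to form the elementary matrix $g_{1}:=\prod_{p\neq j}t_{pj}(x_{p})$, whose $j$-th column equals $u$; applying $g_{1}$ therefore replaces the $j$-th column of $\sigma$ by the vector $\sigma u$, whose $i$-th entry is $\sigma_{i*}u$ and is annihilated by $y$ on the left. I would use $g_{1}$ in the first reduction step to bring $\sigma u$ into play, and then choose $g_{2}$ and $g_{3}$ as transvections carrying the scalars $a$, $y$ (into the $i$-th row or column) and $b$, so that the second and third reductions peel off the diagonal and superfluous off-diagonal entries and leave exactly $t_{kl}(ayx_{i}b)$. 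Throughout, the commutators are evaluated with the relations (R1)--(R3) of Lemma~\ref{lemelrel}.

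The main obstacle is precisely this last point: pinning down $a_{1}$, $b_{1}$, $g_{2}$, $g_{3}$ so that the telescoping product collapses to a single transvection. The delicate step is the cancellation of the contaminating entry in the $i$-th position, which must be arranged to appear with $y$ on the left so that the hypothesis $y\sum_{p}\sigma_{ip}x_{p}=0$ makes it vanish; keeping the intermediate matrices $a_{t},b_{t}$ in a controlled rank-one-update shape, so that each further commutator with a transvection stays manageable, is the bookkeeping that carries the argument. Once these choices are verified, the conclusion follows formally from Lemma~\ref{lemredux} as above.
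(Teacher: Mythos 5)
Your framework is the right one---simultaneous reduction, Lemma~\ref{lemredux}, and index normalisation via Lemma~\ref{lemp} and $\Perm_n(R)$---and your $g_1=\prod_{p\neq j}t_{pj}(x_p)$ is exactly the paper's auxiliary matrix $\tau$. But the proposal stops precisely where the proof has to begin, and you say so yourself: the choice of the pair, of $g_2,g_3$, and the verification that the chain collapses to $t_{kl}(ayx_ib)$ are deferred as ``the main obstacle \dots once these choices are verified''. Those computations \emph{are} the proposition; the rest is formal. Concretely, the paper sets $\xi:=[t_{ri}(-y),\sigma^{-1}]^{\tau}$ with $r\notin\{i,j\}$, observes that $\xi=(\sigma^{-1})^{t_{ri}(y)\tau}\,\sigma^{\tau}$ is a product of two $\E_n(R)$-conjugates of $\sigma^{\pm1}$, and then makes the decisive computation: since $t_{ri}(y)^{\sigma}=e+\sigma'_{*r}y\sigma_{i*}$ and $(y\sigma_{i*}\tau)_j=y\sum_p\sigma_{ip}x_p=0$ (this is where the hypothesis and $x_j=1$ enter), the matrix $t_{ri}(y)^{\sigma\tau}$ has $j$-th column $e_{*j}$. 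It is this trivial column that makes the explicit two-step chain $(t_{rj}(-yx_i)t_{ri}(-y),\,t_{ri}(y)^{\sigma\tau})\xrightarrow{t_{ji}(b),\,t_{jr}(a)}(t_{ji}(ayx_ib),e)$ telescope (for $i=j$ the pair is $(t_{ri}(-y),\,t_{ri}(y)^{\sigma\tau})$ and the chain uses $t_{is}(b),t_{ir}(a)$); Lemma~\ref{lemredux} then yields $4$ conjugates of $\xi^{\pm1}$, hence $8$ of $\sigma^{\pm1}$. None of this verification appears in your text.

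There is also a structural mismatch that would likely derail your plan as stated. You read $8$ as $2^3$ and look for a literal length-$3$ chain starting from $(a_1,a_1^{-1}\sigma)$ with $a_1\in\E_n(R)$; the paper's count is instead $2\cdot 2^2$: the first ``doubling'' is a commutator manufactured \emph{outside} the reduction formalism, after which $\xi$ is \emph{re-split} into a fresh pair before the length-$2$ chain is run. This re-splitting is not a reduction step, and forcing it into one fails: already in the case $i=j$, the natural attempt $(\tau,\tau^{-1}\sigma^{-1})\xrightarrow{t_{ri}(-y)}(a_2,b_2)$ gives $a_2=[\tau^{-1},t_{ri}(-y)]=e$ (the factors commute by (R1), (R2)), after which every subsequent first component is $[e,g]=e$ and the telescoping you describe is unavailable. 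Finally, your sketch omits the case split $i=j$ versus $i\neq j$ and, with it, the only source of the factor $x_i$: in the paper $x_i$ appears because $t_{ri}(-y)^{\tau}=t_{rj}(-yx_i)t_{ri}(-y)$ when $i\neq j$, while the reduction elements carry only $a$ and $b$ (not $y$, which enters at the $\xi$-stage, contrary to your description of $g_2,g_3$). So as it stands the proposal is a plausible plan rather than a proof, and its specific shape would have to be replaced by the paper's two-stage mechanism to go through.
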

\begin{proof}
{\bf Case 1} Suppose $i=j$. Choose $1\leq r,s\leq n$ such that $i$, $r$ and $s$ are pairwise distinct. Set $\tau:=\prod\limits_{p\neq i}t_{pi}(x_p)$ and 
\[\xi:=[t_{ri}(-y),\sigma^{-1}]^{\tau}=t_{ri}(-y)t_{ri}(y)^{\sigma\tau}\]
(note that $t_{ri}(-y)$ and $\tau$ commute). Clearly $\xi$ is a product of $2$ $\E_n(R)$-conjugates of $\sigma$ and $\sigma^{-1}$. Since $t_{ri}(y)^{\sigma}=e+\sigma'_{*r}y\sigma_{i*}$ and $(y\sigma_{i*}\tau)_i=0$, we get that $(t_{ri}(y)^{\sigma\tau})_{*i}=e_{*i}$. One checks easily that
\[(t_{ri}(-y),t_{ri}(y)^{\sigma\tau})\xrightarrow{t_{is}(b),t_{ir}(a)}(t_{is}(ayb), e).\] 
It follows from Lemma \ref{lemredux} that $t_{is}(ayb)$ is a product of $4$ $\E_n(R)$-conjugates of $\xi$ and $\xi^{-1}$. Thus $t_{is}(ayb)$ is a product of $8$ $\E_n(R)$-conjugates of $\sigma$ and $\sigma^{-1}$. The assertion of the proposition follows now from Lemma \ref{lemp}.\\
\\
{\bf Case 2} Suppose $i\neq j$. Choose $1\leq r\leq n$ such that $i$, $j$ and $r$ are pairwise distinct. Set $\tau:=\prod\limits_{p\neq j}t_{pj}(x_p)$ and
\[\xi:=[t_{ri}(-y),\sigma^{-1}]^{\tau}=t_{rj}(-yx_i)t_{ri}(-y)t_{ri}(y)^{\sigma\tau}\]
(note that $t_{ri}(-y)^\tau=t_{rj}(-yx_i)t_{ri}(-y)$). Clearly $\xi$ is a product of $2$ $\E_n(R)$-conjugates of $\sigma$ and $\sigma^{-1}$. Since $t_{ri}(y)^{\sigma}=e+\sigma'_{*r}y\sigma_{i*}$ and $(y\sigma_{i*}\tau)_j=0$, we get that $(t_{ri}(y)^{\sigma\tau})_{*j}=e_{*j}$. One checks easily that
\[(t_{rj}(-yx_i)t_{ri}(-y),t_{ri}(y)^{\sigma\tau})\xrightarrow{t_{ji}(b),t_{jr}(a)}(t_{ji}(ayx_ib), e).\] 
It follows from Lemma \ref{lemredux} that $t_{ji}(ayx_ib)$ is a product of $4$ $\E_n(R)$-conjugates of $\xi$ and $\xi^{-1}$. Thus $t_{ji}(ayx_ib)$ is a product of $8$ $\E_n(R)$-conjugates of $\sigma$ and $\sigma^{-1}$. The assertion of the proposition follows now from Lemma \ref{lemp}.
\end{proof}

\subsection{Proof of Theorem 3}

\begin{proposition}\label{propsct2}
Suppose that $R$ is an exchange ring and $n\geq 3$. Let $\sigma\in \GL_n(R)$, $i\neq j$, $k\neq l$ and $a,b,c\in R$. Then 
\begin{enumerate}[(i)]
\itemsep0pt 
\item $t_{kl}(a\sigma_{ij}b)$ is a product of $16n-8$ $\E_n(R)$-conjugates of $\sigma$ and $\sigma^{-1}$ and
\item $t_{kl}(a(c\sigma_{ii}-\sigma_{jj}c)b)$ is a product of $48n-24$ $\E_n(R)$-conjugates of $\sigma$ and $\sigma^{-1}$.
\end{enumerate}
\end{proposition}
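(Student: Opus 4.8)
The plan is to derive both statements from Proposition \ref{propsct}, whose hypothesis requires a relation of the shape $y\sum_p\sigma_{ip}x_p=0$ with some $x_j=1$, producing an elementary transvection whose entry is $a\,y\,x_i\,b$ expressible as a product of $8$ $\E_n(R)$-conjugates. The core difficulty is that Proposition \ref{propsct} delivers entries of the form $a\,y\,x_i\,b$ built from a \emph{linear dependence among the entries of a single row of $\sigma$}, whereas here I want to isolate a single off-diagonal entry $\sigma_{ij}$ (part (i)) or a commutator of diagonal entries $c\sigma_{ii}-\sigma_{jj}c$ (part (ii)). So the main obstacle is engineering, from the invertibility of $\sigma$ (i.e.\ from the relations $\sum_p\sigma_{ip}\sigma'_{pk}=\delta_{ik}$), the right dependence relations and then combining the resulting transvections; the exchange-ring hypothesis is what lets me split the relevant $1$ into orthogonal idempotents so that the ``$x_j=1$'' requirement can be met piecewise.

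For part (i): I would fix a row index, say $i$, and use the invertibility relation $\sum_{p}\sigma_{ip}\sigma'_{pk}=\delta_{ik}$. Taking $k\neq i$ gives $\sum_p\sigma_{ip}\sigma'_{pk}=0$, a dependence of exactly the form required by Proposition \ref{propsct} once one of the coefficients is normalised to $1$. Since an arbitrary $\sigma'_{pk}$ need not equal $1$, I would invoke property (1) of the exchange ring to write $1=\sum_{q}e_q$ with orthogonal idempotents $e_q=\sigma_{iq}\sigma'_{qk}r_q$, and for each $q$ apply Proposition \ref{propsct} with the normalised relation obtained after multiplying on the right by $r_q$ and inserting the idempotent $e_q$; this contributes $8$ conjugates per index $q$. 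Summing over the $n$ indices (and using relation (R1) to multiply the resulting transvections, together with Lemma \ref{lemp} to move to the desired positions $(k,l)$) produces $t_{kl}(a\sigma_{ij}b)$. Bookkeeping the count — $8$ conjugates for each of roughly $2n$ applications, minus the overlap from the identity factor — should yield the stated $16n-8$.

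For part (ii): the quantity $c\sigma_{ii}-\sigma_{jj}c$ is a difference of two diagonal contributions, so I would produce it as a difference of two expressions, each handled like part (i). The standard trick is to write $c\sigma_{ii}-\sigma_{jj}c=(c\sigma_{ii}-c)-(\sigma_{jj}c-c)=c(\sigma_{ii}-1)-(\sigma_{jj}-1)c$ and then realise each of $\sigma_{ii}-1=-\sum_{p\neq i}\sigma_{ip}\sigma'_{pi}$ and $\sigma_{jj}-1=-\sum_{p\neq j}\sigma_{jp}\sigma'_{pj}$ via the diagonal invertibility relations $\sum_p\sigma_{ip}\sigma'_{pi}=1$. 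Each such diagonal relation, after the same exchange-ring idempotent splitting, feeds Proposition \ref{propsct} and yields a transvection with the appropriate entry; assembling both pieces via (R1) and relocating via Lemma \ref{lemp} gives $t_{kl}(a(c\sigma_{ii}-\sigma_{jj}c)b)$. Since part (ii) essentially requires the work of part (i) applied to two diagonal relations and then combined, the count triples to $3(16n-8)=48n-24$, which matches the claim. The only delicate points are verifying that the off-diagonal sums telescope correctly modulo the desired entry and that the conjugate counts combine exactly as stated rather than merely up to a bounded factor.
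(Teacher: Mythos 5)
There are genuine gaps in both parts, and in part (ii) the approach itself fails.

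In part (i), you invoke property (1) to extract orthogonal idempotents $e_q=\sigma_{iq}\sigma'_{qk}r_q$ summing to $1$ from the relation $\sum_p\sigma_{ip}\sigma'_{pk}=0$ with $k\neq i$. But property (1) applies only to a decomposition of $1$ as a sum; a sum equal to $0$ yields nothing. You must instead use the diagonal relation $\sum_p\sigma_{ip}\sigma'_{pi}=1$ and set $e_p=\sigma_{ip}\sigma'_{pi}r_p$, as the paper does. Moreover, your normalisation step does not work as described: left-multiplying the off-diagonal relation by $e_q$ or right-multiplying by $r_q$ leaves every coefficient $\sigma'_{pk}$ unequal to $1$, so Proposition \ref{propsct} still does not apply. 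The actual trick is to manufacture \emph{new} relations from the idempotent identity $e_p=e_p\sigma_{ip}\sigma'_{pi}r_p$, namely $e_p(\sigma_{ii}-\sigma_{ip}\sigma'_{pi}r_p\sigma_{ii})=0$ for $p\neq i$ (coefficient of $\sigma_{ii}$ equal to $1$, output $t_{kl}(ae_pb)$) and $e_i(\sigma_{ij}-\sigma_{ii}\sigma'_{ii}r_i\sigma_{ij})=0$ (coefficient of $\sigma_{ij}$ equal to $1$, output $t_{kl}(ae_i\sigma'_{ii}r_i\sigma_{ij}b)$); note that the output of Proposition \ref{propsct} is always $t_{kl}(ayx_ib)$ with $x_i$ the coefficient of $\sigma_{ii}$, so $\sigma_{ij}$ can only enter through $x_i$, not through the normalised coefficient. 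Finally, the reassembly you wave at (``summing over the $n$ indices \dots should yield $16n-8$'') is exactly where the work lies: one needs the explicit identity $a\sigma_{ij}b=\sum_p a\sigma_{ii}e_p\sigma'_{ii}r_i\sigma_{ij}b+\sum_{p\neq i}ae_p\sigma_{ij}b$, giving the precise count $8n+8(n-1)=16n-8$.

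In part (ii), the splitting $c\sigma_{ii}-\sigma_{jj}c=c(\sigma_{ii}-1)-(\sigma_{jj}-1)c$ with each piece realised separately cannot work. Take $\sigma=ue$ with $u$ a central unit, $u\neq 1$ (e.g.\ $R=\Z/4\Z$, $u=3$; local rings are exchange rings): every $\E_n(R)$-conjugate of $\sigma^{\pm1}$ equals $u^{\pm1}e$, so every product of such conjugates has the form $u^me$ and no nontrivial transvection $t_{kl}(ac(\sigma_{ii}-1)b)=t_{kl}(ac(u-1)b)$ can arise --- yet your plan would produce exactly these. Only the combination $c\sigma_{ii}-\sigma_{jj}c$ \emph{modulo off-diagonal entries} is accessible (in this example it vanishes, consistently). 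Your supporting identity is also false: the relation $\sum_p\sigma_{ip}\sigma'_{pi}=1$ gives $\sigma_{ii}\sigma'_{ii}-1=-\sum_{p\neq i}\sigma_{ip}\sigma'_{pi}$, not $\sigma_{ii}-1=-\sum_{p\neq i}\sigma_{ip}\sigma'_{pi}$. The paper's route avoids all of this by conjugation: the $(j,i)$ entry of $\sigma^{t_{ji}(-c)}$ is $c\sigma_{ii}-\sigma_{jj}c+\sigma_{ji}-c\sigma_{ij}c$, so one application of (i) to $\sigma^{t_{ji}(-c)}$ plus two applications of (i) to $\sigma$ (removing $\sigma_{ji}$ and $c\sigma_{ij}c$) yield the claim with count $3(16n-8)=48n-24$; that your heuristic count agrees is an accident of the factor $3$, not evidence the argument closes.
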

\begin{proof}
\begin{enumerate}[(i)]
\item Since $R$ has property (1), there are there are elements $r_1,\dots,r_n\in R$ and idempotents $e_1, \dots,e_n\in R$ such that $e_p=\sigma_{ip}\sigma'_{pi}r_p~(1\leq p\leq n)$ and $e_1 + \dots + e_n = 1$. Clearly $e_p(\sigma_{ii}-\sigma_{ip}\sigma'_{pi}r_p\sigma_{ii})=0$ for any $p\neq i$. It follows from Proposition \ref{propsct} that 
\begin{equation}
t_{kl}(ae_pb)\text{ is a product of }8~\E_n(R)\text{-conjugates of }\sigma\text{ and }\sigma^{-1} \text{ for any }p\neq i.
\end{equation}
Moreover, $e_i(\sigma_{ij}-\sigma_{ii}\sigma'_{ii}r_i\sigma_{ij})=0$. It follows from Proposition \ref{propsct} that 
\begin{equation}
t_{kl}(ae_i\sigma'_{ii}r_i\sigma_{ij}b)\text{ is a product of }8~\E_n(R)\text{-conjugates of }\sigma\text{ and }\sigma^{-1}.
\end{equation}
Clearly 
\[t_{kl}(a\sigma_{ij}b)=(\prod\limits_pt_{kl}(a\sigma_{ii}e_p\sigma'_{ii}r_i\sigma_{ij}b)(\prod\limits_{p\neq i}t_{kl}(ae_p\sigma_{ij}b)).\]
It follows from (5) and (6) that $t_{kl}(a\sigma_{ij}b)$ is a product of $8n+8(n-1)=16n-8$  $\E_n(R)$-conjugates of $\sigma$ and $\sigma^{-1}$.
\item Clearly the entry of $\sigma^{t_{ji}(-c)}$ at position $(j,i)$ equals $c\sigma_{ii}-\sigma_{jj}c+\sigma_{ji}-c\sigma_{ij}c$. Applying (i) to $\sigma^{t_{ji}(-c)}$ we get that $t_{kl}(a(c\sigma_{ii}-\sigma_{jj}c+\sigma_{ji}-c\sigma_{ij}c)b)$ is a product of $16n-8$ $\E_n(R)$-conjugates of $\sigma$ and $\sigma^{-1}$. Applying (i) to $\sigma$ we get that $t_{kl}(a(c\sigma_{ij}c-\sigma_{ji})b)=t_{kl}(ac\sigma_{ij}cb)t_{kl}(-a\sigma_{ji}b)$ is a product of $2\cdot(16n-8)=32n-16$ $\E_n(R)$-conjugates of $\sigma$ and $\sigma^{-1}$. It follows that $t_{kl}(a(c\sigma_{ii}-\sigma_{jj}c)b)=t_{kl}(a(c\sigma_{ii}-\sigma_{jj}c+\sigma_{ji}-c\sigma_{ij}c)b)t_{kl}(a(c\sigma_{ij}c-\sigma_{ji})b)$ is a product of $48n-24$ $\E_n(R)$-conjugates of $\sigma$ and $\sigma^{-1}$.
\end{enumerate}
\end{proof}

\begin{proof3}
($\Rightarrow$) Suppose that $H$ is normalised by $\E_n(R)$. Let $I\lhd R$ be the ideal generated by the set 
\[\{\sigma_{ij}, a\sigma_{ii}-\sigma_{jj}a\mid \sigma\in H,i\neq j,a\in R\}.\]
Then $H\subseteq \C_n(R,I)$ by Remark \ref{rmkcong}. It remains to shown that $\E_n(R,I)\subseteq H$. Since $H$ is normalised by $\E_n(R)$, it suffices to show that $\E_n(I)\subseteq H$. But that clearly follows from Proposition \ref{propsct2}.\\
($\Leftarrow$) Suppose that $\E_n(R,I)\subseteq H\subseteq \C_n(R,I)$ for some ideal $I\lhd R$. Then 
\[[\E_n(R),H]\subseteq [\E_n(R),\C_n(R,I)]=\E_n(R,I)\subseteq H\]
by Theorem \ref{thmstcom}. Thus $H$ is normalised by $\E_n(R)$. In order to show the uniqueness of $I$ suppose that $J\lhd R$ is an ideal such that $\E_n(R,J)\subseteq H\subseteq \C_n(R,J)$. If $x\in I$, then $t_{12}(x)\in \E_n(R,I)\subseteq H \subseteq \C_n(R,J)$. Hence, by Remark \ref{rmkcong}, $x\in J$. Thus we have shown that $I\subseteq J$. The inclusion $J\subseteq I$ follows by symmetry.
\end{proof3}

\end{document}